\newtheorem{theorem}{Theorem}
\newtheorem{example}{Example}
 \newtheorem{corollary}{Corollary}
\newtheorem{definition}{Definition}
\newtheorem{lemma}{Lemma}
\tikzset{
bicolor/.style 2 args={
  thick,dashed,dash pattern=on 3pt off 3pt,#1,
  postaction={draw,dashed,thick,dash pattern=on 3pt off 3pt,#2,dash phase=3pt}
  },
swiggle/.style={
    -stealth,decorate,decoration={snake,amplitude=3pt,pre length=2pt,post length=3pt}
  }
}
\pgfplotsset{compat=1.18}
\begin{document}
\sloppy

\title{Kharitonov's Theorem with Degree Drop: a Wronskian Approach}
\author{Anthony Stefan\footnote{Email: astefan2015@my.fit.edu}\; and Aaron Welters\footnote{Email: awelters@fit.edu}\\Florida Institute of Technology\\Department of Mathematics and Systems Engineering\\Melbourne, FL USA\\
\\
Jason Elsinger\footnote{Email: jason\_elsinger@dpsnc.net}\\Charles E.\ Jordan High School\\Department of Mathematics\\Durham, NC USA}

\date{}
\maketitle

\begin{abstract}
In this paper, we present a simplified proof of Kharitonov's Theorem, an important result on determining the Hurwitz stability of interval polynomials. Our new approach to the proof, which is based on the Wronskian of a pair of polynomials, is not only more elementary in comparison to known methods, but is able to handle the degree drop case with ease.
\end{abstract}

\section{Introduction}
Recently, the theory of stable polynomials has seen a surge in interest as it has played a key role in the solving of several long-standing open problems \cite{09BBa, 09BBb, 09BBc, 10BB, 11DW, 12RP, 15PB, 15MSS, 15EM, 16GKVW, 19GN, 22CS, 20KS, 24ACV}. Part of the success is based on novel applications and multivariable generalizations of some classical results in the stability of polynomials, such as the Hermite-Biehler Theorem \cite[Theorem 6.3.4]{02RS} and the Hermite-Kakeya-Obreschkoff Theorem \cite[Theorem 6.3.8]{02RS}, where the Wronskian of two polynomials %[as defined by \eqref{DefWronkianOfhandg}]
played a key role (e.g., see \cite{09BBa}--\cite{10BB} and the survey \cite{11DW}). In addition, there has recently been an increased interest in generalizing stability results to other classes such as entire functions and matrices (e.g., see \cite{11VK, 14AD, 19OK, 21ZD}) as well as convex invertible cones of functions (e.g., see \cite{11AL, 13AL, 21HN, 21AL, 22HM, 24AL}). %and Lorentzian polynomials \cite{20BH, 22HMMD, 23JH, 24PA, 25PD}.%For a generalization to quasi-polynomials and entire functions of Kharitonov's Theorem see \cite{05OS}, keeping in mind they are only able to generalize the analog of the degree invariant version of it. Thus, its quite possible to find a degree drop version of it if we consider our methods. This could be a future paper for us. Another idea of future research could be for robust Schur stability using our method as it is more interesting as its got some nuances. Another future paper could be Kharitnov's Theorem for Lorentzian polynomials regarding their robust stability (this is natural due to their connection with convexity topics and hence may pay dividends).

Motivated by this, we consider Kharitonov's Theorem \cite{78VK} (see also \cite{88PV, 05HP, 22AE}), a classical result in polynomial stability theory which has many applications in linear systems and control theory \cite{93JA, 95BCK, 05HP, 08AP, 17YT}. In this paper, we give a new elementary proof of this theorem (i.e., Theorem \ref{ThmKhar} in Sec.\ \ref{sec:KharThmStatement} and our proof in Sec.\ \ref{sec:PrfKharThm}) using the Wronskian as the main tool to do so. As a consequence, we have the shortest path to the proof in comparison to other methods (cf.\ \cite{78VK, 93JA, 94RB, 95BCK, 96DH, 99WT, 00KT, 05HP, 08AP}) and without sacrificing clarity in the exposition. 

Let us briefly elaborate on these points. First, in our new approach we use the result on the continuity of the roots of a polynomial (see Lemma \ref{LemMain}) in the proof of Lemma \ref{LemBddCrossnThmAndConvexCombStablePolysEndPts}, which is our extended version of the ``Boundary Crossing Theorem" (cf.\ \cite{93JA, 95BCK}) that allows for a ``degree drop" (see also Def.\ \ref{def:degreedropP} below). Second, our approach emphasizes the Wronskian of two polynomials [as defined by (\ref{DefWronkianOfhandg}) below]. Next, while our paper is inspired by the proof using the Bezoutian \cite{99WT} (see also \cite{93AF, 05OO}), %(see \cite[Sec.\ 13.3]{85LT}, \cite{90HW}, \cite{10HR})
the approach we take is simpler as it uses only the basic properties of the Wronskian (e.g., Lemma \ref{LemPosWronskian}). Another common approach to the proof of Kharitonov's Theorem involves a moving Kharitonov rectangle (see Fig.\ \ref{FigKharitonovRectangleUndirected} below) %[this rectangle, see Fig.\ \ref{FigKharitonovRectangleUndirected} below, is defined by \ref{DefKharRectangle} in terms of the interval polynomial $P$ in \eqref{DefIntervalPolyOrdern}] 
together with the ``zero-exclusion condition'' (e.g., see \cite[Remark 5.7.3, Lemma 5.7.9, and Sec.\ 5.8]{94RB}). However, this approach leaves the analyst with something to be desired in terms of the finer details of the argument. In addition, the zero-exclusion condition uses a weaker version of the continuity of roots of a polynomial (e.g., see \cite[Lemma 4.8.2]{94RB}) that can only deal with the degree invariant case and hence cannot treat the degree drop case, which we can do easily. Finally, as our proof is kept at a more elementary level and is self-contained (starting from Lemma \ref{LemMain} in Sec.\ \ref{sec:PrelimResults}), it will not only be of interest to mathematicians but will also be of independent interest in the systems theory, engineering, and control community, where Kharitonov's Theorem is well-known and where elementary proofs of classical results in stability theory are desired (for instance, see \cite{90CMB, 95GM}).

Let us give a motivation for Kharitonov's Theorem before we state  it. Consider any ordinary differential equation that is homogeneous, linear, and  scalar with constant real coefficients
\begin{gather*}
    \sum_{i=0}^na_i\frac{d^iy}{dt^i}=0,
\end{gather*}
where $a_i\in \mathbb{R}, i=0,\ldots, n$. These ODEs are called \textit{stable} \cite{00FGv2} if every classical solution $y=y(t)$ satisfies $\lim_{t\rightarrow+\infty}y(t)=0$. In particular, solutions of the form $y(t)=e^{\lambda t},$ where $ \lambda\in \mathbb{C}$, correspond to roots of the characteristic polynomial
\begin{gather*}
    p(z)=\sum_{i=0}^na_iz^i.
\end{gather*}
Thus, in order for the ODEs above to be stable, all the roots $\lambda$ of the polynomial $p=p(z)$ must have negative real part, i.e.,
\begin{gather*}
    p(\lambda)=0 \iff \operatorname{Re}\lambda <0.
\end{gather*}
Next, if there is uncertainty in the range of the parameters $a_i, i=0,\ldots, n$, then one is lead to consider stability of a whole family $P$ of real polynomials
\begin{gather}
    P=\left\{p:p=\sum_{i=0}^na_iz^i,a_i\in [a_i^{-},a_i^{+}] \right\},\label{DefIntervalPolyOrdern}
\end{gather}
where $a_i^{-},a_i^{+}$ are fixed real numbers satisfying $a_i^{-}\leq a_i^{+}, i=0,\ldots, n$, with either $a_n^{-}\not=0$ or $a_{n}^+\not=0$. Finally, as the family $P$ is infinite (assuming as we do that $a_i^-<a_i^+$ for at least one $i$), it becomes problematic to test whether all polynomials $p\in P$ have roots that lie in the open left-half plane, i.e., in the set $\{z\in\mathbb{C}:\operatorname{Re}z<0\}$ (whereas testing a small number of polynomials is not considered an issue\label{AFootnote}\footnote{The \textit{Routh-Hurwitz problem} \cite{77SS} is to find necessary and sufficient conditions for a given polynomial $p$ to have all its roots lying in the open left-half plane. Efficient algorithms do exist to test this such as the Routh-Hurwitz criterion, %stability criterion/test 
which can be proved using just continuity of the roots of the polynomials (see \cite{95GM}). This then gives another application of Lemma \ref{LemMain} in the stability theory of polynomials (also see \cite{90CMB}).}). %For other sources concerning the Routh-Hurwitz problem, see %\cite[Chap. 3]{83SB}, \cite[Chap. 13]{85LT},
%\cite[Sec. 7.3, 7.8]{98PW}, \cite[Chap. XV]{00FGv2}, \cite[Sec. 11.4, 11.7]{02RS}, \cite[Sec. 3.4]{05HP}, as well as \cite{13MP, 69RD, 77SS, 77AJ, 77BS, 81KN, 03OH} and the references therein.}). 
It then becomes desirable to find a finite subset $V\subseteq P$ such that all the roots of the polynomials in $V$ lying in the open left-half plane implies all the roots of $P$ do as well. This is why Kharitonov’s Theorem (see Theorem \ref{ThmKhar}) is important, as it tells us that such a test exists with only the four element set $V=\{k_1,k_2,k_3,k_4\}$ of Kharitonov polynomials (see Def.\ \ref{DefKharPoly}).

 %The rest of the paper is organized as follows. In Section \ref{sec:KharThmStatement}, we setup and state Kharitonov's Theorem. Next, in Section \ref{sec:PrelimResults}, we give the preliminary results that are used to prove that theorem. Finally, in Section \ref{sec:PrfKharThm} we prove the theorem.  %Perhaps be a bit more detailed. Or is this even needed given how short the paper is an our elaboration of the points above?

\section{Kharitonov's Theorem: The  Statement}\label{sec:KharThmStatement}
%In this section we will setup and then state Kharitonov's Theorem along with an illustrative example.
%\textcolor{red}{This first paragraph needs to be editted/rewritten...}Here we consider an application of Lemma \ref{LemMain} in the stability theory of polynomials by using it to give an elementary proof of Kharitonov’s Theorem \cite{78VK, 22AE}. This important result in stability theory (cf.\ \cite{88PV, 93JA, 94RB, 95BCK, 98PW, 05HP, 08AP, 13MP, 17YT}) can introduce the reader to the subject of robust stability of polynomial families arising in mathematical systems theory (especially in robust control under parametric uncertainty) and, more generally, in the theory of stable polynomials and its applications.
In this section we state precisely Kharitonov's Theorem and give an example of its use.% the first of which is based on the early work of Adolf Hurwitz (1859--1919) \cite{1895AH, 17BK} in stability theory.
\begin{definition}\label{DefHurwitzStablePolynomial}
    A polynomial $p(z)$ is called Hurwitz stable if all its roots lie in the open left half-plane $\{z\in\mathbb{C}:\operatorname{Re}z<0\}$.
\end{definition}

\begin{definition}\label{def:degreedropP}
    The set of real polynomials $P$ in (\ref{DefIntervalPolyOrdern}) is called an interval polynomial of order $n$ %following \cite{06YF}
    and is degree invariant if $a_n^{-}a_{n}^+\not=0$; otherwise, it has degree drop. %following \cite{94RB}
    Moreover, $P$ is said to be Hurwitz stable (or robustly stable) if every polynomial in $P$ is Hurwitz stable.
\end{definition}

\begin{definition}\label{DefKharPoly}
    The Kharitonov polynomials %following \cite{94RB}
    associated with an interval polynomial $P$ in (\ref{DefIntervalPolyOrdern}) are the following four polynomials $k_j=k_j(z), j=1,2,3,4$, in $P$ defined as
\begin{gather}
    \label{DefKharPolys}\left\{\begin{array}{l}
        k_1=a_0^{-}+a_1^{-}z+a_2^{+}z^2+a_3^{+}z^3+\cdots;\\
    k_2=a_0^{+}+a_1^{-}z+a_2^{-}z^2+a_3^{+}z^3+\cdots;\\
    k_3=a_0^{+}+a_1^{+}z+a_2^{-}z^2+a_3^{-}z^3+\cdots;\\
    k_4=a_0^{-}+a_1^{+}z+a_2^{+}z^2+a_3^{-}z^3+\cdots.
    \end{array}\right.
\end{gather}
\end{definition}
%Note the order of these Kharitonov polynomials $k_j=k_j(z), j=1,2,3,4$ are defined exactly as in \cite{96DH}.

The next result, called \textit{Kharitonov’s Theorem}, characterizes the Hurwitz stability of an interval polynomial $P$. It was first proven in 1978 by Vladimir Kharitonov %(1950--)
\cite{78VK}\footnote{The problem of finding necessary and sufficient conditions for Hurwitz stability of a given interval polynomial $P$ was first posed in 1953 by S.\ Faedo \cite{53SF}. For more on this, see \cite{22AE}.} in the case $P$ is degree invariant. In 1992, T.\ Mori and H.\ Kokame \cite{92MK} showed how to extended his theorem to the case $P$ has degree drop, but they required the four Kharitonov polynomials along with two other distinguished polynomials in $P$. In 1996, Hernandez and Dormido \cite{96DH} showed that those two additional polynomials are not needed. After this, there were several proofs of the theorem given in the degree drop case \cite{99WT, 00KT} (see also \cite{06YF}), each with the goal of providing an elementary proof.
\begin{theorem}[Kharitonov’s Theorem]\!\!\!\footnote{There is also an extension of this theorem to polynomials with complex coefficients \cite{78VKb, 87BS, 08OT, 22AE}, but we do not treat it here in order to keep the presentation as simple as possible.}\label{ThmKhar} 
%It should also be noted that an alternative stability criterion exists that requires checking just \emph{two} members of a polynomial family to determine if it is Hurwitz stable instead of four (see \cite{89VH}). The method uses the \textit{Markov parameters} of a polynomial (see \cite[Chap.\ XV, Sec.\ 15]{00FGv2}) instead of its coefficients, but this lies outside the scope of this paper.}
An interval polynomial $P$ is Hurwitz stable
if and only if the Kharitonov polynomials $k_j,j=1,2,3,4,$ associated with $P$ are Hurwitz stable.
\end{theorem}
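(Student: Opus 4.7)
The forward direction is immediate since each $k_j$ belongs to $P$. For the nontrivial converse, I would argue by contradiction: assume $k_1,k_2,k_3,k_4$ are Hurwitz stable but some $p_\star\in P$ is not. Because $P$ is coefficient-wise convex, the segment $p_t=(1-t)p_\star+tk_1$ lies entirely in $P$ for $t\in[0,1]$, with $p_1=k_1$ stable and $p_0=p_\star$ not. Applying the extended Boundary Crossing Theorem (Lemma \ref{LemBddCrossnThmAndConvexCombStablePolysEndPts})---which builds on the continuity of roots from Lemma \ref{LemMain} and, crucially, handles the degree drop case directly---yields a parameter $t_\star\in[0,1)$ and a real frequency $\omega_0\in\mathbb{R}$ such that the polynomial $q:=p_{t_\star}\in P$ satisfies $q(i\omega_0)=0$.

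Next I would exploit the pairing visible in (\ref{DefKharPolys}). Writing any real polynomial uniquely as $p(z)=h(z^2)+zg(z^2)$, the four Kharitonov polynomials realize exactly the four combinations $h^{\epsilon}(z^2)+zg^{\delta}(z^2)$ with $\epsilon,\delta\in\{+,-\}$, where $h^\pm,g^\pm$ are the two extreme even and odd halves determined by the sign patterns of (\ref{DefKharPolys}). For any $p=h(z^2)+zg(z^2)\in P$ and any real $\omega$, the value $p(i\omega)=h(-\omega^2)+i\omega g(-\omega^2)$ lies in the closed axis-aligned rectangle whose corners are the four $k_j(i\omega)$. The equation $q(i\omega_0)=0$ therefore forces $0$ to lie in this Kharitonov rectangle at the frequency $\omega_0$.

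The core of the proof---and the place I expect the main obstacle---is to show that Hurwitz stability of $k_1,\dots,k_4$ prevents $0$ from ever lying in the Kharitonov rectangle. Here I would invoke Lemma \ref{LemPosWronskian}, whose role is to reformulate the Hermite-Biehler characterization as a definite sign condition on the Wronskian $W(h,g)$ from (\ref{DefWronkianOfhandg}) on $(-\infty,0]$. Applied to each Kharitonov polynomial, this yields four simultaneous sign inequalities on $W(h^\epsilon,g^\delta)$ at $x=-\omega_0^2$, while the geometric statement ``$0$ lies in the rectangle'' translates into two-sided coordinate inequalities on $h^\pm(-\omega_0^2)$ and $\omega_0 g^\pm(-\omega_0^2)$. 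The hard step is to combine these two sets of inequalities into a sign contradiction by manipulating the elementary algebraic identities that the pointwise Wronskian satisfies; this is where the Wronskian approach is simpler than the Bezoutian argument of \cite{99WT}. The edge case $\omega_0=0$ (which would force the constant term of $q$ to vanish) is ruled out directly by $a_0^->0$, itself a consequence of $k_1$ being Hurwitz, while the degree drop case is absorbed in the extended Boundary Crossing step and requires no separate treatment.
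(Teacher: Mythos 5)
Your overall architecture matches the paper's: forward direction trivial, converse by joining the unstable $p_\star$ to a stable Kharitonov polynomial along a segment in $P$, invoking Lemma \ref{LemBddCrossnThmAndConvexCombStablePolysEndPts} to produce a purely imaginary zero, locating $0$ in the Kharitonov rectangle, and contradicting positivity of the Wronskians $W[h_\pm,g_\pm]$. But there are two genuine gaps. First, choosing $k_1$ as the stable endpoint does not secure the hypotheses of Lemma \ref{LemBddCrossnThmAndConvexCombStablePolysEndPts}: that lemma needs the leading coefficient $a_n(t)$ to be nonzero at the \emph{stable} endpoint and on the whole half-open interval, so the degree may drop only at the unstable end. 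In the degree drop case $k_1$ may have degree strictly less than $n$ (this happens already in Example \ref{example2}, where $k_1$ and $k_4$ have degree $3$ while $n=4$), and then for $p_\star$ of exact degree $n$ your family $(1-t)p_\star+tk_1$ has vanishing leading coefficient precisely at the stable endpoint $t=1$. The two examples following Lemma \ref{LemBddCrossnThmAndConvexCombStablePolysEndPts} show this situation genuinely breaks the conclusion: a root can enter from infinity in the right half-plane without ever crossing the imaginary axis. The fix is what the paper does: normalize $a_n^+>0$ (replacing $P$ by $-P$ if necessary), pick a Kharitonov polynomial $k$ whose leading coefficient is $a_n^+$, hence of exact degree $n$, and use Corollary \ref{CorKharPolysPositiveCoeffs} to see that the leading coefficient $(1-t)a_n^++ta_n$ of $(1-t)k+tp_\star$ is positive for all $t\in[0,1)$.

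Second, the ``hard step'' you defer --- combining the rectangle membership with the four Wronskian sign conditions --- is the heart of the proof and is not actually carried out; as sketched (two sets of inequalities at $x=-\omega_0^2$ to be ``manipulated'') it is not clear it closes. The paper's mechanism is an identity, not an inequality chase: writing $0=q(i\omega_*)=\sum_j\alpha_jk_j(i\omega_*)$ as a convex combination of the vertices gives $h(\omega_*)=g(\omega_*)=0$ with $h=(\alpha_1+\alpha_4)h_-+(\alpha_2+\alpha_3)h_+$ and $g=(\alpha_1+\alpha_2)g_-+(\alpha_3+\alpha_4)g_+$, whence $W[h,g](\omega_*)=0$ directly from (\ref{DefWronkianOfhandg}); bilinearity of $W$ then writes this zero as a nonnegative combination of the four $W[h_\mp,g_\mp](\omega_*)$, each strictly positive by Lemma \ref{LemPosWronskian} applied to $\kappa_j(\omega)=k_j(i\omega)$, forcing all $\alpha_j=0$ and contradicting $\sum_j\alpha_j=1$. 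Note also that Lemma \ref{LemPosWronskian} requires non-constant polynomials, so the case where some Kharitonov polynomial is constant ($n=1$, $a_1^-=0$ by Corollary \ref{CorKharPolysPositiveCoeffs}) does need the brief separate treatment the paper gives it; your claim that degree drop ``requires no separate treatment'' beyond the boundary crossing step is therefore not quite right, while your special handling of $\omega_0=0$ is unnecessary once the Wronskian identity is in place.
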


The following example demonstrates a use of this theorem. 
\begin{figure}[!ht]
%    \hspace*{-0.2in}
    \begin{subfigure}{0.48\textwidth}
        \includegraphics[scale=0.1475]{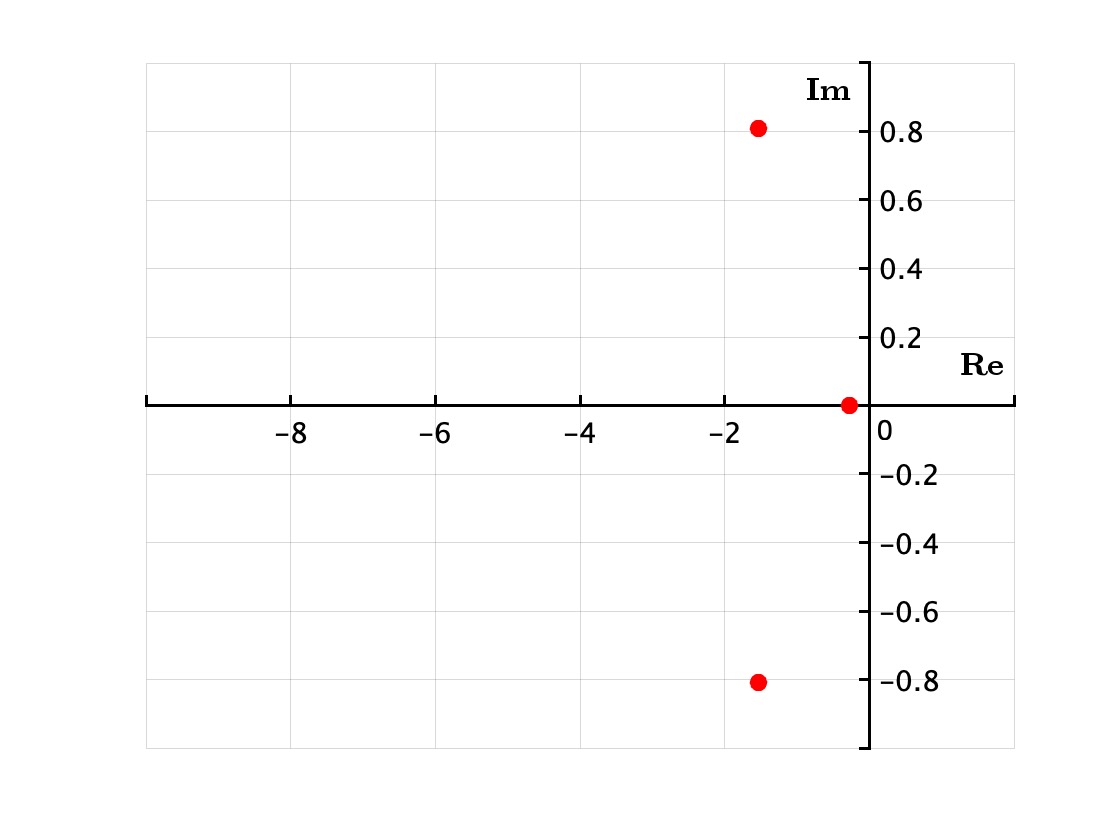}
    \end{subfigure}  
    \begin{subfigure}{0.48\textwidth}
        \includegraphics[scale=0.1475]{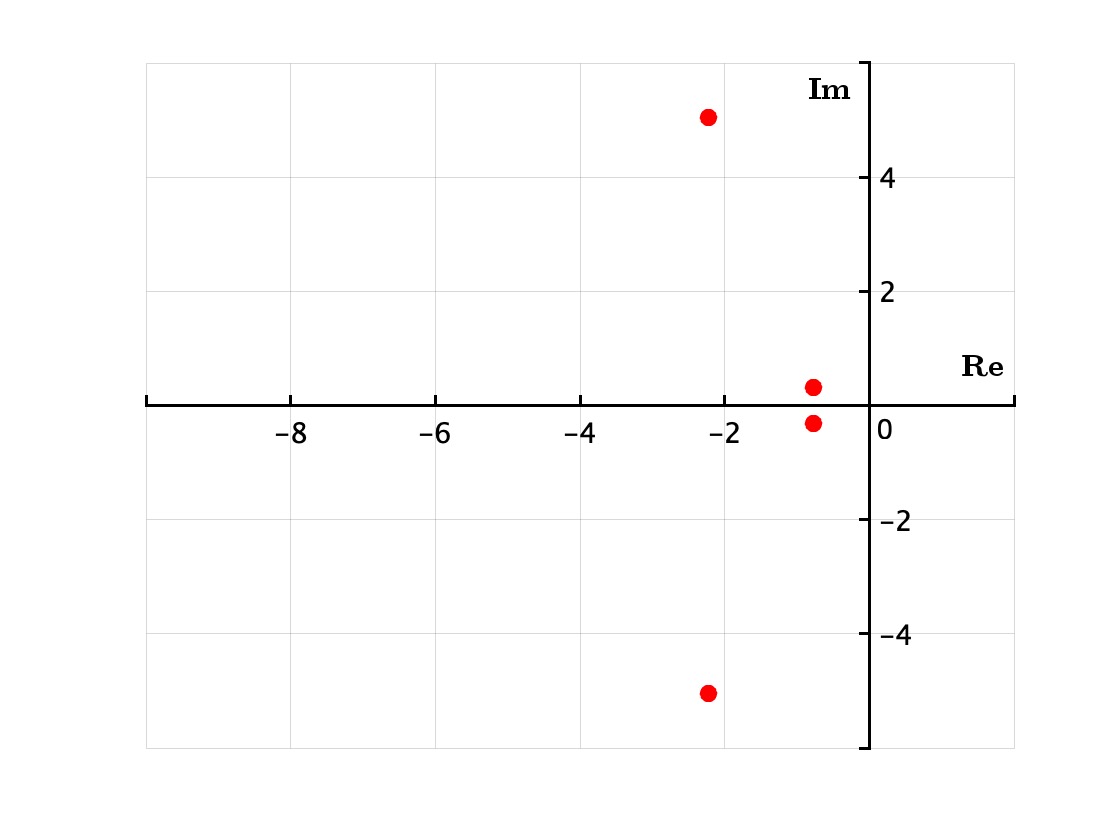}
    \end{subfigure} \\
%   \hspace*{-0.2in}
    \begin{subfigure}{0.48\textwidth}
      \includegraphics[scale=0.1475]{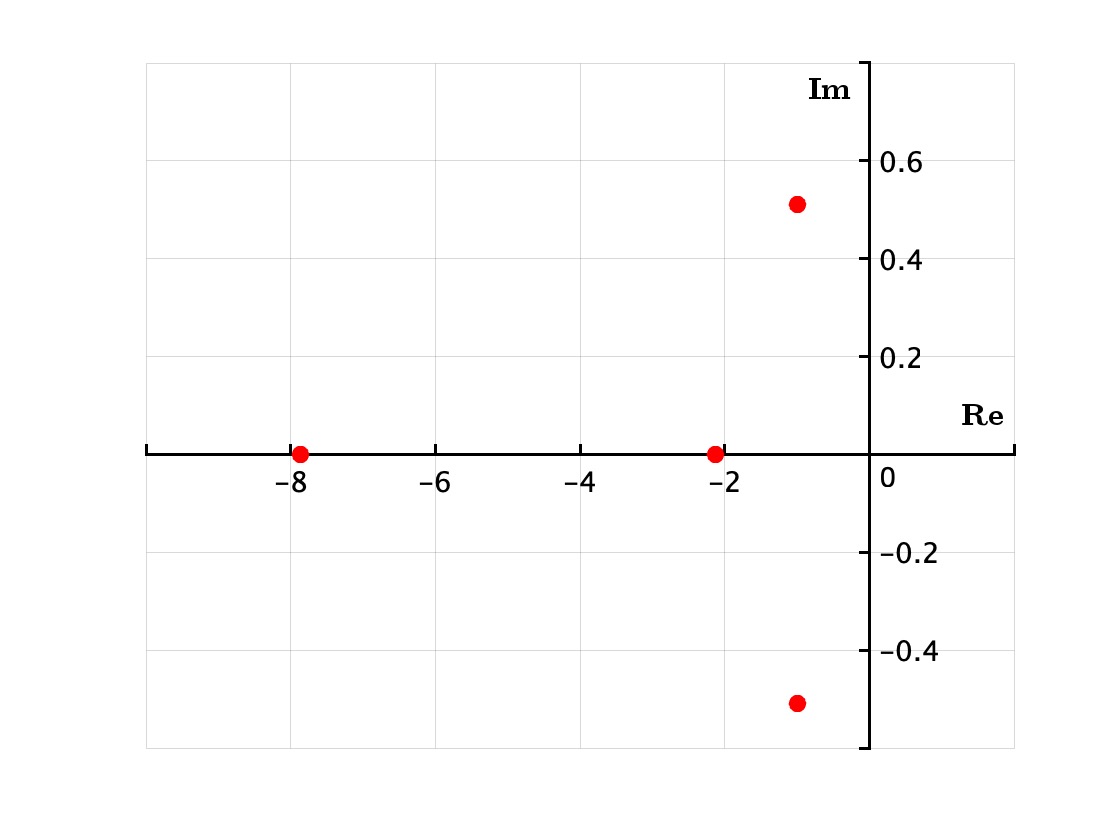}
    \end{subfigure}
    \begin{subfigure}{0.48\textwidth}
       \includegraphics[scale=0.1475]{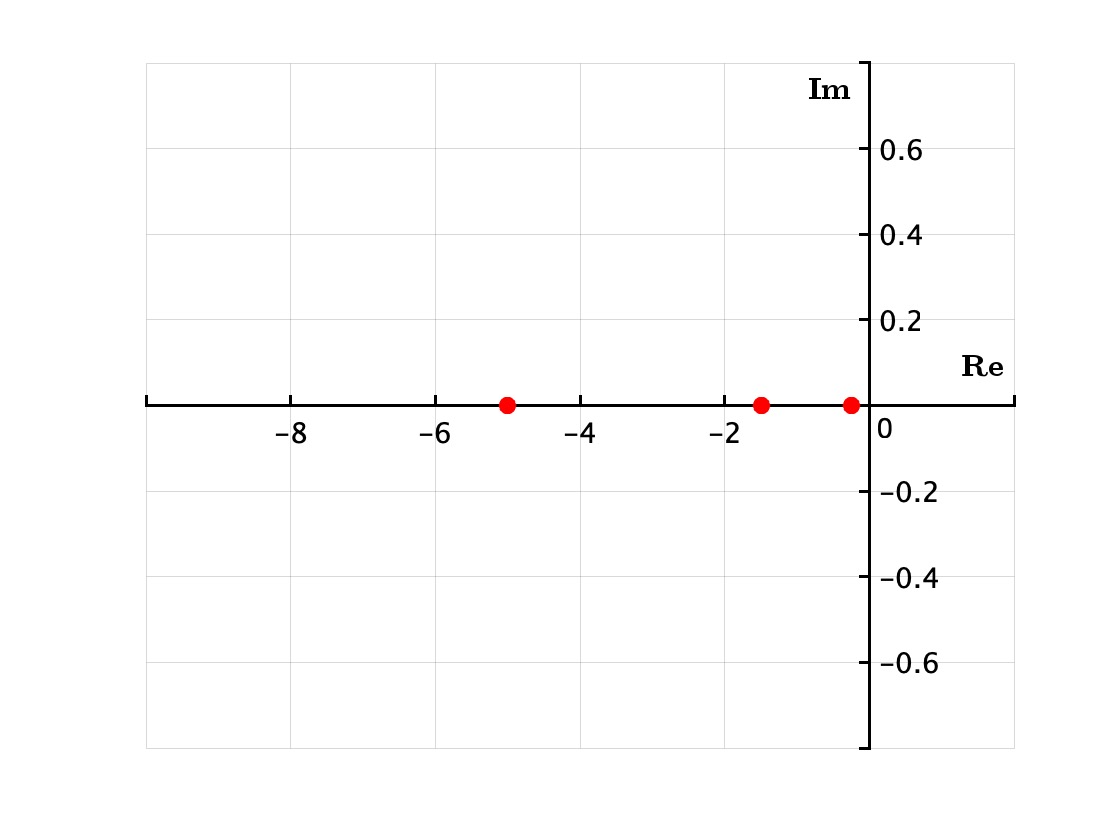}
    \end{subfigure}
    \caption{Plots of all the roots in the complex plane of the Kharitonov polynomials $k_j, j=1,2,3,4$, given by (\ref{DefKharPolysExample}) in Example \ref{example2} as generated by MATLAB. Here the roots are shown as the red dots for $k_1(z)$ on the top left, $k_2(z)$ on the bottom left, $k_3(z)$ on the top right, and $k_4(z)$ on the bottom right. Using the $\operatorname{roots}(\cdot)$ command in MATLAB, one obtains the approximation for the roots of $k_1(z)$ as $z=-0.28, -1.53\pm0.81i$, the roots of $k_2(z)$ as $z=-7.87, -2.13, -1.00\pm0.51i$, the roots of $k_3(z)$ as $z=-2.23\pm5.05i, -0.77\pm0.31i$, and the roots of $k_4(z)$ as $z=-5.10, -1.32, -0.25$. As the real parts of all these roots are negative (which can also be shown using the methods described in Footnote \ref{AFootnote}), Kharinotov's Theorem (i.e., Theorem \ref{ThmKhar}) implies that every polynomial $p$ in the interval polynomial $P$, as given by (\ref{ExInvervalPolyPartI}) and (\ref{ExInvervalPolyPartII}) in this example, is a Hurwitz stable polynomial.
    }
    \label{fig:Kpoly}
\end{figure}

\begin{example}\label{example2}
Let $P$ be the interval polynomial {\upshape{(}}of order $n=4$ with degree drop{\upshape{)}} whose elements are of the form
\begin{gather}
    p(z)=a_4z^4+a_3z^3+a_2z^2+a_1z+a_0\label{ExInvervalPolyPartI}
\end{gather}
with coefficients belonging to the following intervals: 
\begin{gather}
    a_0\in[10, 21], a_1\in[46, 50], a_2\in[38, 40], a_3\in[6, 12], a_4\in[0, 1].\label{ExInvervalPolyPartII}
\end{gather}
Then the Kharitonov polynomials are given by
\begin{gather}
    \label{DefKharPolysExample}\left\{\begin{array}{l}
        k_1=10+46z+40z^2+12z^3;\\
    k_2=21+46z+38z^2+12z^3+z^4;\\
    k_3=21+50z+38z^2+6z^3+z^4;\\
    k_4=10+50z+40z^2+6z^3.
    \end{array}\right.
\end{gather}
Each of these four polynomials are Hurwitz stable, which can be verified either by the Routh-Hurwitz criterion (see Footnote \ref{AFootnote}) or numerically (see Fig.\ \ref{fig:Kpoly}). Thus, by Kharitonov’s Theorem (i.e., Theorem \ref{ThmKhar}), the entire family $P$ is Hurwitz stable.
\end{example}

\section{Preliminary Results}\label{sec:PrelimResults}
In order to prove Kharitonov's Theorem, we will need some notation and preliminary results. First, consider any pair of polynomial $q(z)$, $p(z)$ in the form:
\begin{align}
    q(z)&= b_nz^n + b_{n-1}z^{n-1} + \cdots + b_1z+b_0, \label{Def_Poly_q}\\
    p(z)&= a_nz^n + a_{n-1}z^{n-1} + \cdots + a_1z+a_0, \label{Def_Poly_p}
\end{align}
for some $a_i, b_i\in \mathbb{C}$ and $i=0,\ldots, n$. As usual, the degree of a nonzero polynomial $p$ will be denoted by $\deg p$.

The next result on the continuity of the roots of polynomials in terms of their coefficients is well-known (see, for instance, \cite[Theorem 3]{89CC}).

\begin{lemma}\label{LemMain}
    Suppose $q(z)$, given by (\ref{Def_Poly_q}), is a nonzero polynomial and, if $\deg q\geq 1$, let $\zeta_1, \ldots, \zeta_d$ be all the distinct roots of $q$ with $m_j$ denoting the multiplicity of the root $\zeta_j$, for $j=1,\ldots, d$. Then for each sufficiently small $\varepsilon>0$, there exists a $\delta>0$ such that any polynomial $p(z)$ of the form (\ref{Def_Poly_p}) with $\max_{0\leq i\leq n}|a_i-b_i|<\delta$ will have exactly $m_j$ roots (counting multiplicities) whose distance from $\zeta_j$ is less than $\varepsilon$ for $j=1,\ldots,d$ and the remaining $\deg p - \deg q$ roots (counting multiplicities) have distance from $0$ greater than $\frac{1}{\varepsilon}$.
\end{lemma}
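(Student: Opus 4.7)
The plan is to prove this by two applications of Rouché's theorem. The first, applied directly to $p$ and $q$, controls the roots near each finite root $\zeta_j$. The second, applied to the reciprocal polynomials, handles the $\deg p - \deg q$ roots that escape toward infinity in the degree-drop case.

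First I would shrink $\varepsilon > 0$, if necessary, so that the closed disks $\overline{D}_j = \{z : |z-\zeta_j| \leq \varepsilon\}$ are pairwise disjoint and all contained in $\{z : |z| < 1/\varepsilon\}$. On the compact union of the circles $\partial D_j$, the continuous function $|q|$ attains a positive minimum $\mu > 0$. Since $p - q = \sum_{i=0}^n (a_i - b_i) z^i$ has coefficients of modulus less than $\delta$, the estimate $|p(z) - q(z)| \leq \delta \sum_{i=0}^n |z|^i$ holds, so by choosing $\delta$ sufficiently small we obtain $|p - q| < \mu \leq |q|$ on every $\partial D_j$. Rouché's theorem then gives that $p$ has exactly $m_j$ zeros (counted with multiplicity) in $D_j$ for each $j = 1, \ldots, d$.

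To control the remaining zeros I would introduce the reciprocal polynomials
\begin{equation*}
\hat{p}(w) = w^n p(1/w) = a_n + a_{n-1} w + \cdots + a_0 w^n, \qquad \hat{q}(w) = w^n q(1/w) = b_n + b_{n-1} w + \cdots + b_0 w^n.
\end{equation*}
Here $\hat{q}$ has a zero at $w = 0$ of multiplicity exactly $n - \deg q$ (since the coefficient of $w^{n - \deg q}$ equals $b_{\deg q} \neq 0$), and its other zeros are the reciprocals $1/\zeta_j$ of the nonzero roots of $q$. After shrinking $\varepsilon$ once more so that the disk $\{w : |w| \leq \varepsilon\}$ avoids these reciprocals, $|\hat{q}|$ has a positive minimum $\nu > 0$ on $|w| = \varepsilon$. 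The coefficients of $\hat{p}$ and $\hat{q}$ are just the reversed sequences $(a_n, \ldots, a_0)$ and $(b_n, \ldots, b_0)$, so they too differ by at most $\delta$. The same Rouché comparison therefore forces $\hat{p}$ to have exactly $n - \deg q$ zeros (with multiplicity) in $|w| < \varepsilon$. But $\hat{p}$ has a zero at $w = 0$ of multiplicity precisely $n - \deg p$, so the remaining $(n - \deg q) - (n - \deg p) = \deg p - \deg q$ zeros in that disk are nonzero and correspond, via $z = 1/w$, to exactly $\deg p - \deg q$ zeros of $p$ satisfying $|z| > 1/\varepsilon$.

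Adding up, we have accounted for $m_1 + \cdots + m_d + (\deg p - \deg q) = \deg q + (\deg p - \deg q) = \deg p$ zeros of $p$, which exhausts all of them. The main obstacle is the reciprocal step, where one must track carefully that the multiplicity of $0$ as a zero of $\hat{p}$ is $n - \deg p$ rather than $n - \deg q$, so that the ``extra'' zeros of $\hat{p}$ clustering near (but not at) $w = 0$ are exactly the images of the roots of $p$ that have escaped toward infinity. The degenerate case $\deg q = 0$ is automatically covered by the reciprocal step alone, as there are then no finite $\zeta_j$ to consider.
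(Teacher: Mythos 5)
Your argument is correct and complete. Note, however, that the paper does not prove Lemma \ref{LemMain} at all: it is stated as a well-known fact with a citation to \cite[Theorem 3]{89CC}, so there is no in-paper proof to compare against. Your double application of Rouch\'e's theorem --- once to $p$ and $q$ on small circles $\partial D_j$ around the finite roots, and once to the reversed polynomials $\hat{p}(w)=w^np(1/w)$, $\hat{q}(w)=w^nq(1/w)$ on $|w|=\varepsilon$ to capture the $\deg p-\deg q$ roots escaping to infinity --- is the standard route taken in such references, and you handle the one genuinely delicate point correctly: the zero of $\hat{p}$ at $w=0$ has multiplicity $n-\deg p$ (not $n-\deg q$), so exactly $\deg p-\deg q$ of the zeros of $\hat{p}$ in the punctured disk $0<|w|<\varepsilon$ pull back to roots of $p$ with $|z|>1/\varepsilon$. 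The final count $m_1+\cdots+m_d+(\deg p-\deg q)=\deg p$ then exhausts all roots of $p$, which is needed to conclude that the clusters near the $\zeta_j$ and the escaping roots account for everything. Two minor points worth making explicit if you write this up: first, for $\delta$ small one has $|a_{\deg q}-b_{\deg q}|<\delta$ with $b_{\deg q}\neq 0$, so $p$ is automatically nonzero with $\deg p\geq\deg q$; second, the thresholds you impose on $\varepsilon$ (disjointness of the $\overline{D}_j$, containment in $|z|<1/\varepsilon$, and $\varepsilon<1/\max_j|\zeta_j|$ over the nonzero roots) are exactly what the phrase ``for each sufficiently small $\varepsilon$'' in the statement permits.
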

 
The following lemma, which is new, is an extended version of a well-known result, called the Boundary Crossing Theorem\footnote{A result that is attributed to Frazer and Duncan in their 1929 paper \cite{29FD} (cf. \cite{93JA}).} (see, for instance, \cite[Theorem 4.3]{93JA}, \cite[Theorem 1.4]{95BCK}), and is a simple corollary of Lemma \ref{LemMain}.

\begin{lemma}%\!\!\!\footnote{\textcolor{red}{Aaron do we want or need this footnote?} In the proof of Kharitonov's Theorem \ref{ThmKhar}, we only need this lemma for convex combinations $p_t(z)=(1-t)p_0(z)+tp_1(z), t\in [0,1]$, of two polynomials $p_0(z),p_1(z)$ in the case when $p_0(z)$ is Hurwitz stable, but $p_1(z)$ is not. We should also point out that for convex combinations of two polynomials, there are many results on necessary and sufficient conditions for the Hurwitz stability of such a family (see, e.g., \cite{85BG, 85SB, 85NB, 89NB, 96EZ, 01HY, 12BG}). However, Lemma \ref{LemBddCrossnThmAndConvexCombStablePolysEndPts} seems to be new in this regard \textcolor{red}{since it also a degree drop} and also has merit, since it has an elementary proof due to Theorem \ref{ThmMain}.
%\textcolor{red}{Aaron: add the convex combination refs--the books years 93,94,95 that have sections on this and the work of ranzter (check spelling)?}
%}
\label{LemBddCrossnThmAndConvexCombStablePolysEndPts}
    Consider any family of polynomials
    \begin{gather*}
         p_{t}(z)=a_n(t)z^n\cdots +a_{1}(t)z+a_0(t),\hspace{0.2in} t\in [a,b],
     \end{gather*}
     where $a_j:[a,b]\rightarrow \mathbb{C}$ is a continuous function for each $j=0,\ldots, n$. If $p_a(z)$ is a Hurwitz stable polynomial such that $a_n(t)\not=0$ for every $t\in [a,b)$, then either $p_{t}(z)$ is Hurwitz stable for every $t\in[a,b]$ or there exists $(t_*,\omega_*)\in (a,b]\times \mathbb{R}$ such that $p_{t_*}(i\omega_*) =0$.
\end{lemma}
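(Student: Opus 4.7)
The plan is to identify the largest parameter value $\tau$ up to which the family remains Hurwitz stable, and to invoke Lemma \ref{LemMain} twice: once at $p_a$ to guarantee $\tau > a$, and once at $p_\tau$ to locate the imaginary-axis root. Concretely, I would define
\[
S = \{t \in [a,b] : p_s \text{ is Hurwitz stable for every } s \in [a,t]\},
\]
so that $S$ is a subinterval of $[a,b]$ containing $a$, and set $\tau = \sup S$. Since $a_n(a) \neq 0$ and $p_a$ is Hurwitz stable, $p_a$ has degree exactly $n$; applying Lemma \ref{LemMain} at $q = p_a$, together with the continuity of the coefficients $a_j$, yields a $\delta > 0$ such that for $t \in [a, a+\delta]$ all roots of $p_t$ lie in a small neighborhood of the roots of $p_a$, and hence in the open left half-plane. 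Thus $\tau > a$.

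Next I would argue by cases. The set $S$ is either $[a,\tau]$ or $[a,\tau)$. If $\tau \in S$ and $\tau = b$, then the first alternative of the conclusion holds. If $\tau \in S$ and $\tau < b$, then $p_\tau$ is Hurwitz stable with $\deg p_\tau = n$ (since $a_n(\tau)\neq 0$), and repeating the previous argument at $p_\tau$ produces a $\delta' > 0$ on which $p_t$ is Hurwitz stable for $t \in (\tau, \tau + \delta']$, contradicting $\tau = \sup S$. So the remaining possibility is $\tau \notin S$, i.e., $p_s$ is Hurwitz stable for every $s \in [a,\tau)$ but $p_\tau$ is not Hurwitz stable. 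In this case I would show $p_\tau$ has a root on the imaginary axis, yielding $(t_*, \omega_*)$ with $t_* = \tau \in (a,b]$.

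If $p_\tau \equiv 0$ (which is only possible when $\tau = b$, since $a_n(t)\neq 0$ for $t < b$ forces $\deg p_t = n$), then any $\omega_*\in\mathbb{R}$ works. Otherwise $p_\tau$ is a nonzero polynomial with at least one root $z_0$ satisfying $\operatorname{Re}(z_0)\geq 0$. Apply Lemma \ref{LemMain} at $q = p_\tau$ (which may have degree strictly less than $n$ when $\tau = b$ with $a_n(b) = 0$): for every sufficiently small $\varepsilon > 0$ there is a $\delta > 0$ such that for $t \in (\tau-\delta,\tau)$ the polynomial $p_t$ has a root within distance $\varepsilon$ of $z_0$, while the remaining $n - \deg p_\tau$ roots of $p_t$ have modulus greater than $1/\varepsilon$. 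Since $p_t$ is Hurwitz stable on $[a,\tau)$, the root near $z_0$ has negative real part, forcing $\operatorname{Re}(z_0) < \varepsilon$; letting $\varepsilon\to 0$ gives $\operatorname{Re}(z_0)\leq 0$, hence $\operatorname{Re}(z_0) = 0$, and $\omega_* = \operatorname{Im}(z_0)$ finishes the argument.

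The main obstacle is precisely the degree-drop scenario at $t=b$: when $a_n(b)=0$, the polynomial $p_\tau = p_b$ has degree strictly less than $n$, and a naive continuity-of-roots statement cannot distinguish the genuine roots of $p_\tau$ from spurious ones arising as the leading coefficient vanishes. The strengthened form of Lemma \ref{LemMain}, which explicitly asserts that the extra $n - \deg p_\tau$ roots of nearby $p_t$ must escape every compact neighborhood of the origin, is exactly what neutralizes this difficulty. Those escaping roots cannot be confused with the one tracked near $z_0$, so the final step reduces to the same clean estimate in both the degree-invariant and degree-drop cases.
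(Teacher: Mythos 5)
Your proof is correct and follows essentially the same route as the paper's: both identify the first parameter value at which Hurwitz stability fails, use Lemma \ref{LemMain} once to show that value exceeds $a$ and again to force a purely imaginary root there, with the degree drop at $t=b$ neutralized exactly as you say by the escaping roots in Lemma \ref{LemMain}. The only cosmetic difference is that you track a single closed-right-half-plane root of $p_\tau$ from the left, whereas the paper combines a left-limit (all roots of $p_{t_*}$ in the closed left half-plane) with a right-limit along unstable $p_{t_k}$ (at least one root in the closed right half-plane); both versions are sound.
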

\begin{proof}
    Suppose there exists a $\tau\in [a,b]$ for which $p_{\tau}(z)$ is not Hurwitz stable. Then from the hypotheses that $p_a(z)$ is Hurwitz stable and  $a_n(t)\not=0$ for all $t\in[a,b)$, it follows from Lemma \ref{LemMain} that 
    \begin{gather*}
        t_*=\inf\{t\in [a,b]:p_t(z)\text{ is not Hurwitz stable}\}\in (a,\tau]. 
    \end{gather*}
     This implies that $p_t(z)$ is Hurwitz stable for all $t\in [a,t_*)$, and that there exists a sequence  $t_k\in [t_*,b]$, with $t_k\rightarrow t_*$ as $k\rightarrow\infty$, such that $p_{t_k}(z)$ is not a Hurwitz stable polynomial for all $k$. If $p_{t_*}(z)$ is the zero polynomial, then the lemma is true. Hence, assume that $p_{t_*}(z)$ is not the zero polynomial. Then by Lemma \ref{LemMain}, all the roots of $p_{t_*}(z)$ must lie in the closed left-half plane since $p_t(z)$ is Hurwitz stable for all $t\in [a,t_*)$. 

     Consider the case $t_*=b$. Then by our hypotheses we must have that $p_{b}(z)$ is not Hurwitz stable, but has all its roots in the closed left-half plane implying it must have a purely imaginary root, i.e., there exists a $\omega_*\in \mathbb{R}$ such that $p_b(i\omega_*)=0$. 
     
     Consider the case $t_*\not=b$. As each $p_{t_k}(z)$ is not a Hurwitz stable polynomial, then $p_{t_k}(z)$ has a root in the closed right-half plane for all $k$. Hence, as $p_{t_k}\rightarrow p_{t_*}$ as $k\rightarrow\infty$ and $a_n(t)\not=0$ for all $t\in [a,b)$, it follows by Lemma \ref{LemMain} that $p_{t_*}(z)$ must also have at least one root in the closed right-half plane. Since $p_{t_*}(z)$ also has all its roots in the closed left-half plane, then it must have at least one purely imaginary root, i.e., there exists a $\omega_*\in \mathbb{R}$ such that $p_{t_*}(i\omega_*)=0$. This completes the proof.
\end{proof}

Before move on, it is worth pointing out that the hypotheses of the above lemma, specifically, that the change in the degree of the polynomial $p_t(z)$ can occur only at the right endpoint $t=b$ (where it can only drop in degree), cannot be weakened as the next two examples show.
\begin{example}
    Consider the family of polynomials $p_t(z)=a_1(t)z+a_0(t), t\in [0,1]$, where $a_j:[0,1]\rightarrow \mathbb{C}, j=0,1$, are the continuous functions defined by 
    \begin{gather*}
        a_0(t)=1,\;a_1(t)=(2t-1)^2-1.
    \end{gather*}
    Then $p_0(z)=p_1(z)=1$ is a Hurwitz stable polynomial and $p_t(i\omega)\not=0$ for every $(t,\omega)\in [0,1]\times\mathbb{R}$, but $p_t(z)$ is not a Hurwitz stable polynomial for each $t\in (0,1)$ since its only root is $z=-1/a_1(t)\in [1,\infty)$.  
    Notice that Lemma \ref{LemBddCrossnThmAndConvexCombStablePolysEndPts} does not apply since $a_1(c)=0$ has two solutions: $c=0$ and $c=1$.
\end{example}
\begin{example}
    Consider the family of polynomials $p_t(z)=a_1(t)z+a_0(t), t\in [0,1/2]$, where $a_j:[0,1/2]\rightarrow \mathbb{C}, j=0,1$, are those in the previous example. Then $p_0(z)$ is a Hurwitz stable polynomial, $a_1(t)=0$ only if $t=0$, and $p_t(i\omega)\not=0$ for every $(t,\omega)\in [0,1/2]\times\mathbb{R}$, but $p_t(z)$ is not a Hurwitz stable polynomial for each $t\in (0,1/2]$.
\end{example}

The next lemma is a classical result in stability theory (see, for instance, \cite[Prop. 11.4.2]{02RS}, \cite[Theorem 9.1]{08AP}).
\begin{lemma}[Stodola's Rule]\label{LemStodolasRule}
    If $p(z)=a_nz^n+\cdots +a_1z+a_0$ is a real polynomial that is Hurwitz stable with $a_n>0$, then $a_{j}>0$ for all $j=0,\ldots, n$.
\end{lemma}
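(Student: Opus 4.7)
The plan is to prove Stodola's Rule by exploiting the real factorization of a Hurwitz stable real polynomial into linear and irreducible quadratic factors, each of which will have strictly positive coefficients.

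First I would observe that since $p(z)$ has real coefficients, its complex roots occur in conjugate pairs, so $p(z)$ admits a factorization over $\mathbb{R}$ of the form
\begin{gather*}
    p(z) = a_n \prod_{i=1}^{r} (z - \lambda_i) \prod_{j=1}^{s} (z - \mu_j)(z - \bar{\mu}_j),
\end{gather*}
where $\lambda_1,\ldots,\lambda_r$ are the real roots and $\mu_1,\bar{\mu}_1,\ldots,\mu_s,\bar{\mu}_s$ are the non-real complex roots (with multiplicity, and $r + 2s = n$).

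Next I would use Hurwitz stability: every $\lambda_i$ is a real number with $\lambda_i < 0$, so $(z - \lambda_i) = (z + |\lambda_i|)$ has both coefficients strictly positive. Similarly, writing $\mu_j = -\alpha_j + i\beta_j$ with $\alpha_j > 0$ and $\beta_j \in \mathbb{R}\setminus\{0\}$, we have
\begin{gather*}
    (z - \mu_j)(z - \bar{\mu}_j) = z^2 + 2\alpha_j z + (\alpha_j^2 + \beta_j^2),
\end{gather*}
whose three coefficients are all strictly positive. Thus each factor in the decomposition is a polynomial with strictly positive real coefficients, and the leading factor $a_n$ is positive by hypothesis.

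Finally, I would invoke the elementary fact that the product of two polynomials with strictly positive real coefficients again has strictly positive real coefficients (this follows immediately from the convolution formula $c_k = \sum_{i+j=k} a_i b_j$, in which every term is nonnegative and at least one is strictly positive for each $k$ in the range $0\le k\le\deg$ of the product). Applying this inductively to the factorization above yields that every coefficient $a_j$ of $p(z)$, for $j=0,\ldots,n$, is strictly positive, which is the desired conclusion. The only mild obstacle is handling the edge cases $r=0$ or $s=0$ cleanly, but both are immediate since an empty product is $1$ and does not affect positivity.
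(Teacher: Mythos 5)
Your proposal is correct and follows essentially the same route as the paper's proof: factor the real Hurwitz stable polynomial into linear factors $z+(-\lambda)$ and quadratic factors $z^2+(-2\operatorname{Re}\lambda)z+|\lambda|^2$, each with strictly positive coefficients, and conclude by multiplying out. The only difference is that you spell out the convolution argument for why the product of polynomials with positive coefficients has positive coefficients, which the paper leaves as immediate.
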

\begin{proof}
    As $p$ is a Hurwitz stable polynomial, then $p(\lambda)=0$ implies $\operatorname{Re}(\lambda)<0$. Since the nonreal zeros of the real polynomial $p$ come in complex conjugate pairs, we can write
    \begin{gather*}
        p(z)=a_n\prod_{\lambda\in \mathbb{R}:p(\lambda)=0, \lambda<0}(z-\lambda)\prod_{\lambda\in \mathbb{C}:p(\lambda)=0,\operatorname{Re}\lambda<0}(z-\lambda)(z-\overline{\lambda})\\
        =a_n\prod_{\lambda\in \mathbb{R}:p(\lambda)=0, \lambda<0}[z+(-\lambda)]\prod_{\lambda\in \mathbb{C}:p(\lambda)=0,\operatorname{Re}\lambda<0}[z^2+(-2\operatorname{Re}\lambda) z+|\lambda|^2].
    \end{gather*}
    The result now follows immediately from this. 
\end{proof}

Next, observe that the Kharitonov polynomials $k_j,j=1,2,3,4$ in (\ref{DefKharPolys}) can be written as
\begin{gather}
    \label{DefRealImagPartKharPolysPartI}\left\{\begin{matrix}
        \kappa_1(\omega)=k_1(i\omega)=h_{-}(\omega)+ig_{-}(\omega),\\
        \kappa_2(\omega)=k_2(i\omega)=h_{+}(\omega)+ig_{-}(\omega),\\
        \kappa_3(\omega)=k_3(i\omega)=h_{+}(\omega)+ig_{+}(\omega),\\
        \kappa_4(\omega)=k_4(i\omega)=h_{-}(\omega)+ig_{+}(\omega),
    \end{matrix}\right.
\end{gather}
where $h_{\pm}(\omega)$ and $g_{\pm}(\omega)$ are the real polynomials given by 
\begin{gather}
    \label{DefRealImagPartKharPolysPartII}\left\{\begin{matrix}
        g_{-}(\omega)=\frac{\kappa_1(\omega)-\overline{\kappa_1(\overline{\omega})}}{2i}=\frac{\kappa_2(\omega)-\overline{\kappa_2(\overline{\omega})}}{2i}=a_1^-\omega-a_3^+\omega^3+a_5^-\omega^5-\cdots,\vspace{0.5em}\\
        \!\!\!\!h_{+}(\omega)=\frac{\kappa_2(\omega)+\overline{\kappa_2(\overline{\omega})}}{2}=\frac{\kappa_3(\omega)+\overline{\kappa_3(\overline{\omega})}}{2}=a_0^+-a_2^-\omega^2+a_4^+\omega^4-\cdots,\vspace{0.5em}\\
        g_{+}(\omega)=\frac{\kappa_3(\omega)-\overline{\kappa_3(\overline{\omega})}}{2i}=\frac{\kappa_4(\omega)-\overline{\kappa_4(\overline{\omega})}}{2i}=a_1^+\omega-a_3^-\omega^3+a_5^+\omega^5-\cdots,\vspace{0.5em}\\
        \!\!\!\!h_{-}(\omega)=\frac{\kappa_1(\omega)+\overline{\kappa_1(\overline{\omega})}}{2}=\frac{\kappa_4(\omega)+\overline{\kappa_4(\overline{\omega})}}{2}=a_0^--a_2^+\omega^2+a_4^-\omega^4-\cdots.
    \end{matrix}\right.
\end{gather}

From this it is easy to prove the following two corollaries (cf.\ \cite{00KT}, \cite{88SD},  \cite[Footnote 1]{89MAD}, \cite{96DH}, and \cite[Lemma 4.1.31]{05HP}), the latter of which gives a characterization of the geometry 
associated with the value set of $P$ evaluated at $z=i\omega$ for fixed $\omega\geq 0$, namely, that the value set
\begin{gather}
    K(i\omega)=\{p(i\omega):p\in P\}\label{DefKharRectangle}
\end{gather} 
has the geometry of a rectangle (as first pointed out explicitly by S.\ Dasgupta \cite{88SD} together with \cite[footnote 1]{89MAD}), called the \textit{Kharitonov rectangle} in \cite{94RB} (cf.\ Fig.\ \ref{FigKharitonovRectangleUndirected}).
\begin{corollary}\label{CorKharPolysPositiveCoeffs}
     If the Kharitonov polynomials $k_j,j=1,2,3,4$ in (\ref{DefKharPolys}) are all Hurwitz stable and $a_n^+>0$, then either $n=0$ and $a_0^->0$, or $n\geq 1$ and
     \begin{gather*}
         a_n^-\geq0\text{ and } a_j^->0,\text{ for all }i=0,\ldots, n-1.
     \end{gather*}
\end{corollary}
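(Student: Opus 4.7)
The plan is to invoke Stodola's Rule (Lemma~\ref{LemStodolasRule}), making essential use of the observation that for every $n$, exactly two of the four Kharitonov polynomials in (\ref{DefKharPolys}) carry $a_n^+$ as their coefficient of $z^n$, while the remaining two carry $a_n^-$ (this is immediate from the periodic sign pattern in (\ref{DefKharPolys})). Since $a_n^+>0$ by hypothesis, Stodola's Rule applied to that first pair immediately yields positivity of \emph{every} coefficient of those two polynomials. The trivial case $n=0$ is handled directly from $k_1=a_0^-$ and $k_2=a_0^+$.

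Next, I would turn to the other pair $k_{i'},k_{j'}$, whose leading coefficient is $a_n^-$, and split into three subcases. If $a_n^->0$, Stodola's Rule applies directly, so \emph{all} four Kharitonov polynomials have every coefficient positive, and hence $a_\ell^->0$ for every $0\leq \ell\leq n$ (since $a_\ell^-$ appears as a coefficient in at least one of the four polynomials, by inspection of (\ref{DefKharPolys})). If $a_n^-=0$, then $k_{i'}$ and $k_{j'}$ drop in degree, but a brief inspection of the sign patterns shows their new leading coefficients are some $a_{n-1}^{\pm}$ already established to be positive in the first step; Stodola's Rule then applies to the reduced polynomials and, combined with the first step, again gives $a_\ell^->0$ for every $0\leq \ell\leq n-1$ (with $a_n^-=0$ being the ``drop''). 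Finally, if $a_n^-<0$, then $-k_{i'}$ has positive leading coefficient $-a_n^-$ and is still Hurwitz stable (same roots), so Stodola would force every coefficient of $k_{i'}$ to be negative, contradicting the positivity of some shared coefficient established in the first step. The first two subcases thus deliver both $a_n^-\geq 0$ and $a_j^->0$ for all $j\in\{0,\ldots,n-1\}$, and the third is ruled out.

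The hardest point is the degree-drop subcase $a_n^-=0$: one must verify that whichever coefficient becomes ``leading'' after the drop was already shown positive in the first step. This reduces to a short case check on $n\bmod 4$, during which the rigid sign structure of (\ref{DefKharPolys}) guarantees that the relevant $a_{n-1}^{\pm}$ appears as a coefficient of one of $k_i,k_j$ and hence is positive by Stodola's Rule applied there. This is precisely the mechanism by which a Stodola-based argument handles the degree-drop case without extra machinery, matching the paper's stated philosophy.
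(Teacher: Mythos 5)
Your proof is correct and takes essentially the same route as the paper, whose own proof of this corollary is a single line---``follows immediately from Lemma~\ref{LemStodolasRule} by considering \eqref{DefRealImagPartKharPolysPartI} and \eqref{DefRealImagPartKharPolysPartII}''---i.e., Stodola's Rule applied through the periodic sign pattern of the Kharitonov coefficients, which is exactly the mechanism you use. Your case split on $a_n^-$ (positive, zero, negative) and the check on $n \bmod 4$ simply supply the details, including the degree-drop subcase, that the paper leaves implicit.
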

\begin{proof}
The proof follows immediately from Lemma \ref{LemStodolasRule} by considering (\ref{DefRealImagPartKharPolysPartI}) and (\ref{DefRealImagPartKharPolysPartII}).
\end{proof}

\begin{figure}[!ht]
   \centering
   \begin{tikzpicture}[scale=1]
  %\draw (3.5,1.5) node[below] {$L_{(1,2)}(i\omega)$};
   % \draw (3.5,3.5) node[above] {$L_{(3,4)}(i\omega)$};
    %  \draw (1.5,2.5) node[left] {$L_{(1,4)}(i\omega)$};
     %   \draw(5.5,2.5) node[right] {$L_{(2,3)}(i\omega)$};
    %axis%%
        \draw[>=stealth, ->] (0,-0.5) --(0,4.5) node[above] {$\operatorname{Im}$};
        \draw[>=stealth, ->] (-0.5,0) --(6.5,0) node[right] {$\operatorname{Re}$};
    %Rec%%
        \draw[thick] (1.5,1.5) -- (1.5,3.5); %left line
        %\draw[thick,>=stealth, ->] (1.5,1.5) -- (1.5,2.5); 
        \draw[thick] (5.5,1.5) -- (5.5,3.5); %right line
        %\draw[thick,>=stealth, ->] (5.5,1.5) -- (5.5,2.5); 
        \draw[thick] (1.5,1.5) -- (5.5,1.5); %bottom line
       % \draw[thick,>=stealth, ->] (1.5,1.5) -- (3.5,1.5); 
        \draw[thick] (1.5,3.5) -- (5.5,3.5); %top line
       % \draw[thick,>=stealth, ->] (5.5,3.5) -- (3.5,3.5); 
        \draw[thick, dashed] (5.5,0) -- (5.5,1.5); %h+ line
        \draw[dashed, thick] (0,3.5) -- (1.5,3.5); %g+ line
        \draw[dashed, thick] (1.5,0) -- (1.5,1.5); %h- line
        \draw[dashed, thick] (0,1.5) -- (1.5,1.5); %g- line
        \draw [color=black] (0,3.5) node[left] {$g_+(\omega)$};
        \draw (1.5,0) node[below] {$h_-(\omega)$};
        \draw (0,1.5)  node[left] {$g_-(\omega)$};
        \draw [color=black] (5.5,0) node[below] {$h_+(\omega)$};
        \draw [color=black, fill=black] (5.5,3.5)circle(0.05) node[right, yshift=0.26cm] {$k_3(i\omega)$};
        \draw [color=black, fill=black] (1.5,3.5)circle(0.05) node[left, yshift=0.26cm, xshift=0.125cm] {$k_4(i\omega)$};
        \draw [color=black, fill=black] (5.5,1.5)circle(0.05) node[right, yshift=-0.26cm] {$k_2(i\omega)$};
        \draw [color=black, fill=black] (1.5,1.5)circle(0.05) node[left, yshift=-0.26cm, xshift=0.125cm] {$k_1(i\omega)$};
        \draw [] (3.5,2.5) node[] {$K(i\omega)$};
        \draw[color=gray] (1.5,2.75) -- (2.35,3.5); %1 lt
        \draw[color=gray] (1.5,2) -- (3.1,3.5); %2 lt
        \draw[color=gray] (1.75,1.5) -- (2.55,2.25); %3 b
        \draw[color=gray] (2.5,1.5) -- (3.25,2.15); %4 b
         \draw[color=gray] (3.25,1.5) -- (4,2.15); %5 b
         \draw[color=gray] (4,1.5) -- (5.5,2.75); %6 rb
        \draw[color=gray] (4.75,1.5) -- (5.5,2.1); %7b
        \draw[color=gray] (3.45,3) -- (4,3.5); %t
        \draw[color=gray] (4.2,3) -- (4.75,3.5); %t
        \draw[color=gray] (4.6,2.7) -- (5.5,3.5); %t
    \end{tikzpicture}  
    \caption{
    Under the assumptions in Corollary \ref{CorKharRectUnDirected}, this figure depicts the Kharitonov rectangle $K(i\omega)=\{p(i\omega):p\in P\}=\{x+iy:h_-(\omega)\leq x\leq h_+(\omega), g_-(\omega)\leq y\leq g_+(\omega)\}$ %[defined by (\ref{DefKharRectangle}) with properties described in Corollary \ref{CorKharRectUnDirected}] 
     in the complex plane with vertices the Kharitonov polynomials $k_j(i\omega)$, $j=1,2,3,4$, evaluated at $z=i\omega$ for a fixed $\omega\geq 0$.
    %and directed line segments $L_{(l,j)}(i\omega), (l,j)=(1,2),(2,3),(3,4),(1,4)$ forming the boundary.
    }
    \label{FigKharitonovRectangleUndirected}
\end{figure}
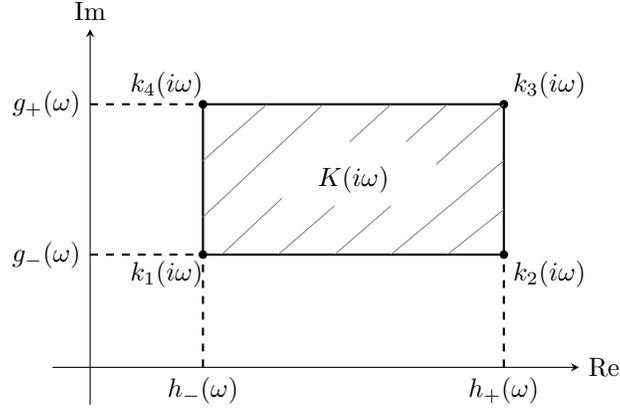

\begin{corollary}\label{CorKharRectUnDirected}
     If the Kharitonov polynomials $k_j,j=1,2,3,4$ in (\ref{DefKharPolys}) are all Hurwitz stable and $a_n^+>0$ then, for every $\omega\geq 0$,
     \begin{gather}
         K(i\omega)=\{x+iy:h_-(\omega)\leq x\leq h_+(\omega), g_-(\omega)\leq y\leq g_+(\omega)\},
     \end{gather}
     where $K(i\omega)$ is the Kharitonov rectangle defined by \eqref{DefKharRectangle}.
\end{corollary}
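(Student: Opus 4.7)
The plan is to fix $\omega\ge 0$ and exploit the elementary observation that, for any $p(z)=\sum_{i=0}^n a_iz^i\in P$, the real and imaginary parts of $p(i\omega)$ depend on disjoint subsets of the coefficients. First I would use $(i\omega)^{2k}=(-1)^k\omega^{2k}$ and $(i\omega)^{2k+1}=i(-1)^k\omega^{2k+1}$ to write
$$\operatorname{Re} p(i\omega)=a_0-a_2\omega^2+a_4\omega^4-\cdots,\qquad \operatorname{Im} p(i\omega)=a_1\omega-a_3\omega^3+a_5\omega^5-\cdots,$$
noting that $\omega\ge 0$ makes every power $\omega^j$ nonnegative, so each monomial above is a constant-signed multiple of its coefficient.

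Second, I would show that as each $a_i$ independently ranges over $[a_i^-,a_i^+]$, the real part attains exactly the interval $[h_-(\omega),h_+(\omega)]$ and the imaginary part attains exactly the interval $[g_-(\omega),g_+(\omega)]$. Each term $(-1)^k\omega^{2k}a_{2k}$ is affine linear with fixed sign in $a_{2k}$, so its image is a closed interval, and a sum of closed intervals is a closed interval by convexity. Comparing the resulting endpoints with the formulas in (\ref{DefRealImagPartKharPolysPartII}) then produces the identifications $\min\operatorname{Re} p(i\omega)=h_-(\omega)$, $\max\operatorname{Re} p(i\omega)=h_+(\omega)$, and likewise for $\operatorname{Im}$.

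Finally, since the even-indexed and odd-indexed coefficient boxes are independent, the image of the map $p\mapsto(\operatorname{Re} p(i\omega),\operatorname{Im} p(i\omega))$ over $p\in P$ is the full Cartesian product $[h_-(\omega),h_+(\omega)]\times[g_-(\omega),g_+(\omega)]$, which under the identification $\mathbb{R}^2\cong\mathbb{C}$ is precisely the rectangle claimed. In particular, the four corners are recovered as the values $k_j(i\omega)$, $j=1,\ldots,4$, via (\ref{DefRealImagPartKharPolysPartI}).

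The only step requiring care is the sign bookkeeping: for each parity class of $k$ one must verify that the extremal choice of $a_{2k}$ (respectively $a_{2k+1}$) paired with the sign $(-1)^k\omega^{2k}$ (respectively $(-1)^k\omega^{2k+1}$) indeed reproduces the sign pattern of $h_\pm$ and $g_\pm$ in (\ref{DefRealImagPartKharPolysPartII}). The Hurwitz-stability and $a_n^+>0$ hypotheses are not actually used in the rectangle characterization itself; they appear in the statement because this is the context in which the result will be applied (notably in combination with Corollary \ref{CorKharPolysPositiveCoeffs}).
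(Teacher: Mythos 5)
Your proof is correct, and it diverges from the paper's in one place worth noting. For the inclusion $K(i\omega)\subseteq R(\omega)$ (where $R(\omega)$ denotes the rectangle on the right-hand side), both arguments are the same term-by-term sign bookkeeping using $\omega^j\geq 0$ for $\omega\geq 0$ together with $a_j^-\leq a_j\leq a_j^+$. For the reverse inclusion the paper argues differently: it observes that the four vertices $k_j(i\omega)$ lie in $K(i\omega)$ by \eqref{DefRealImagPartKharPolysPartI}, that $K(i\omega)$ is convex (being the image of the convex coefficient box $[a_0^-,a_0^+]\times\cdots\times[a_n^-,a_n^+]$ under an affine map), and that a convex subset of $R(\omega)$ containing all four vertices must be all of $R(\omega)$. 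You instead use the fact that $\operatorname{Re}p(i\omega)$ and $\operatorname{Im}p(i\omega)$ depend on disjoint (even- versus odd-indexed) blocks of coefficients which vary independently, so the image is the full Cartesian product of the two intervals $[h_-(\omega),h_+(\omega)]$ and $[g_-(\omega),g_+(\omega)]$. Both routes are sound; yours makes the product structure of the rectangle explicit and is arguably more self-contained, while the paper's convexity argument is shorter to state. Your closing remark that the Hurwitz-stability and $a_n^+>0$ hypotheses are not actually used is accurate: although the paper formally invokes Corollary \ref{CorKharPolysPositiveCoeffs} to write $0\leq a_j^-\leq a_j\leq a_j^+$, the displayed inequalities never use the lower bound $0\leq a_j^-$, only $\omega\geq 0$ and the ordering of the interval endpoints, so the characterization of $K(i\omega)$ holds for any interval polynomial.
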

\begin{proof}
For a fixed $\omega\geq 0$, we define the rectangle
\begin{gather*}
    R(\omega)=\{x+iy:h_-(\omega)\leq x\leq h_+(\omega), g_-(\omega)\leq y\leq g_+(\omega)\}.
\end{gather*} 
Then from Corollary \ref{CorKharPolysPositiveCoeffs} we have 
\begin{align*}
    0\leq a_j^-\leq a_j\leq a_j^+,\;\; j=0,\ldots, n,
\end{align*} 
and hence it follows by the identities (\ref{DefRealImagPartKharPolysPartI}) and (\ref{DefRealImagPartKharPolysPartII}) that
\begin{align*}
    h_-(\omega)=a_0^--a_2^+\omega^2+a_4^-\omega^4-\cdots&\leq a_0-a_2\omega^2+a_4\omega^4-\cdots \\
    &\leq  a_0^+-a_2^-\omega^2+a_4^+\omega^4-\cdots=h_+(\omega),\\
    g_-(\omega)=a_1^-\omega-a_3^+\omega^3+a_5^-\omega^5-\cdots&\leq a_1\omega-a_3\omega^3+a_5\omega^5-\cdots \\
    &\leq  a_1^+\omega-a_3^-\omega^3+a_5^+\omega^5-\cdots=g_+(\omega),
\end{align*}
which implies the inclusion $K(i\omega)\subseteq R(\omega)$\footnote{We only need the inclusion $K(i\omega)\subseteq R(\omega)$ in the proof of Kharitonov's Theorem. 
}. Next, by (\ref{DefRealImagPartKharPolysPartI}), it follows that the vertices $\kappa_j(
\omega
), j=1,2,3,4$, of the rectangle $R(\omega)$ are in $K(i\omega)$ and as $K(i\omega)$ is contained in $R(\omega)$,  we need only prove that $K(i\omega)$ is a convex set. But this follows immediately from the definition of the interval polynomial $P$ in (\ref{DefIntervalPolyOrdern}) and the fact that the Cartesian product of intervals $[a_0^-,a_0^+]\times\cdots\times [a_n^-,a_n^+]$ is a convex set.
\end{proof}

Next, define the \textit{Wronskian} of two polynomials $h(\omega)$ and $g(\omega)$, denoted by  $W[h,g](\omega)$, to be the polynomial
    \begin{gather}
        W[h,g](\omega)=h(\omega)g'(\omega)-h'(\omega)g(\omega).\label{DefWronkianOfhandg}
    \end{gather} 
The following lemma is well-known (see, for instance, \cite[Proposition 3.4.5]{05HP}).
\begin{lemma}\label{LemPosWronskian}
    Let $f(\omega)=h(\omega)+ig(\omega)$ be a non-constant polynomial, where $h(\omega)$ and $g(\omega)$ are real polynomials. If all the roots of $f$ are in the open upper-half plane then, for every $\omega \in \mathbb{R}$,
    \begin{gather}
        W[h,g](\omega)=h(\omega)g'(\omega)-h'(\omega)g(\omega)>0.\label{SignOfWronkianOfhandg}
    \end{gather}
\end{lemma}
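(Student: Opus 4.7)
The plan is to use the logarithmic derivative of $f$ and compute its imaginary part in two different ways.

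First, I would factor $f$. Since $f$ is non-constant, say of degree $n\geq 1$, I can write
\begin{gather*}
    f(z)=c\prod_{j=1}^{n}(z-z_j),
\end{gather*}
where $c\in\mathbb{C}\setminus\{0\}$ and $z_1,\ldots,z_n$ are the roots of $f$ (with multiplicity), each lying in the open upper half-plane, i.e.\ $\operatorname{Im}z_j>0$ for all $j$. In particular, $f(\omega)\neq 0$ for every real $\omega$, so the logarithmic derivative is well-defined on $\mathbb{R}$:
\begin{gather*}
    \frac{f'(\omega)}{f(\omega)}=\sum_{j=1}^n\frac{1}{\omega-z_j}.
\end{gather*}

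Next I would take imaginary parts of this identity. Writing $z_j=x_j+iy_j$ with $y_j>0$, I rationalize each term:
\begin{gather*}
    \frac{1}{\omega-z_j}=\frac{\omega-\overline{z_j}}{|\omega-z_j|^2}=\frac{(\omega-x_j)+iy_j}{|\omega-z_j|^2},
\end{gather*}
so $\operatorname{Im}\bigl(1/(\omega-z_j)\bigr)=y_j/|\omega-z_j|^2>0$. Summing, I conclude that $\operatorname{Im}\bigl(f'(\omega)/f(\omega)\bigr)>0$ for every $\omega\in\mathbb{R}$.

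Finally, I would compute the same imaginary part directly from $f=h+ig$. For real $\omega$, $h(\omega),g(\omega),h'(\omega),g'(\omega)$ are real, so
\begin{gather*}
    \frac{f'(\omega)}{f(\omega)}=\frac{h'(\omega)+ig'(\omega)}{h(\omega)+ig(\omega)}=\frac{\bigl(h'(\omega)+ig'(\omega)\bigr)\bigl(h(\omega)-ig(\omega)\bigr)}{h(\omega)^2+g(\omega)^2},
\end{gather*}
whose imaginary part equals $\bigl(h(\omega)g'(\omega)-h'(\omega)g(\omega)\bigr)/|f(\omega)|^2=W[h,g](\omega)/|f(\omega)|^2$. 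Comparing with the previous step and multiplying by the positive quantity $|f(\omega)|^2$ yields $W[h,g](\omega)>0$, which is exactly \eqref{SignOfWronkianOfhandg}.

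I do not anticipate a real obstacle here; the only subtlety is ensuring $f(\omega)\neq 0$ on $\mathbb{R}$ (which comes for free from the strict upper half-plane hypothesis) and that the factorization has no real linear factors (same reason). The argument is purely algebraic once the logarithmic derivative identity is in place.
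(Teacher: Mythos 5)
Your argument is correct and is essentially identical to the paper's proof: both use the partial fraction decomposition of the logarithmic derivative $f'/f$, the positivity of $\operatorname{Im}\bigl(1/(\omega-\lambda)\bigr)$ for roots $\lambda$ in the open upper half-plane, and the identity $\operatorname{Im}\bigl[f'(\omega)\overline{f(\omega)}\bigr]=W[h,g](\omega)$ for real $\omega$. No gaps.
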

\begin{proof}
This follows immediately from the hypotheses by considering the partial fraction decomposition of $f'/f$:
    \begin{gather*}
        \frac{f'(\omega)}{f(\omega)}=\sum_{\lambda\in \mathbb{C}:f(\lambda)=0}\frac{1}{\omega-\lambda}=\sum_{\lambda\in \mathbb{C}:\operatorname{Im}\lambda>0,f(\lambda)=0}\frac{1}{\omega-\lambda}, \text{ if }f(\omega)\not=0;\\
        \operatorname{Im}\left[\frac{f'(\omega)}{f(\omega)}\right]= \sum_{\lambda\in \mathbb{C}:\operatorname{Im}\lambda>0,f(\lambda)=0}\frac{\operatorname{Im}\lambda-\operatorname{Im}\omega}{|\omega-\lambda|^2}>0,\; \text{if }\operatorname{Im}\omega\leq 0;\\
        W[h,g](\omega)=\operatorname{Im}[f'(\omega)\overline{f(\omega)}]=|f(\omega)|^2\operatorname{Im}\left[\frac{f'(\omega)}{f(\omega)}\right]>0,\text{ if }\omega\in \mathbb{R}. \qedhere
    \end{gather*}
\end{proof}

\section{Proof of Kharitonov's Theorem}\label{sec:PrfKharThm}
We are now ready to prove Kharitonov's Theorem. 
\begin{proof}[Proof of Theorem \ref{ThmKhar}]
($\Rightarrow$): If the interval polynomial $P$ in \eqref{DefIntervalPolyOrdern} is Hurwitz stable, then so are the Kharitonov polynomials $k_j, j=1,2,3,4$, in \eqref{DefKharPolys} since $k_j\in P$ for $j=1,2,3,4$.

($\Leftarrow$): Conversely, suppose the Kharitonov polynomials in \eqref{DefKharPolys} are all Hurwitz stable. We may assume that $a_n^+>0$, for otherwise we can consider the interval polynomial $-P$ instead. %(i.e., replace coefficients of all polynomials in $P$ with their negative). 
We give a proof by contradiction. Assume that there exists $p\in P$ which is not Hurwitz stable. As $a_n^+\not=0$, there exists a Kharitonov polynomial 
 in (\ref{DefKharPolys}) with degree $n$, which we denote by $k(z)$. Consider the family of polynomials given by the convex combination 
\begin{gather*}
    p_t(z)=(1-t)k(z)+tp(z), \hspace{0.2in}t\in[0,1],
\end{gather*}
of the polynomials $k(z)$ and $p(z)$.
 Then by Corollary \ref{CorKharPolysPositiveCoeffs} and since $k(z)$ is a degree $n$ Hurwitz stable polynomial, it follows that the hypotheses of Lemma \ref{LemBddCrossnThmAndConvexCombStablePolysEndPts} hold for this family. Furthermore, since $p_1(z)=p(z)$ is not Hurwitz stable, Lemma \ref{LemBddCrossnThmAndConvexCombStablePolysEndPts} implies there exists a $(t_*,\omega_*)\in (0,1]\times\mathbb{R}$ such that $p_{t_*}(i\omega_*)=0$. By Corollary \ref{CorKharRectUnDirected}, there exists $\alpha_1,\alpha_2,\alpha_3,\alpha_4\in [0,1]$ with $\sum_{j=1}^4\alpha_j=1$ such that
\begin{gather*}
   p_{t_*}(i\omega_*)=\sum_{j=1}^4\alpha_jk_j(i\omega_*).
\end{gather*}
In terms of the real polynomials $h_{\pm}(\omega), g_{\pm}(\omega)$ from (\ref{DefRealImagPartKharPolysPartI}) and (\ref{DefRealImagPartKharPolysPartII}), this implies that
\begin{gather*}
\sum_{j=1}^4\alpha_jk_j(i\omega)=h(\omega)+ig(\omega),
\end{gather*}
where $h(\omega)$ and $g(\omega)$ are the real polynomials given by
\begin{gather*}
    h(\omega)=(\alpha_1+\alpha_4)h_-(\omega)+(\alpha_2+\alpha_3)h_+(\omega),\\
     g(\omega)=(\alpha_1+\alpha_2)g_-(\omega)+(\alpha_3+\alpha_4)g_+(\omega).
\end{gather*}
It follows immediately from this and $h(\omega_*)=g(\omega_*)=0$ that%\footnote{\textcolor{red}{Also using...add the formula properties of bilinearity (without using new terms)}} 
$$0=W[h,g](\omega_*)=(\alpha_1+\alpha_4)(\alpha_1+\alpha_2)W[h_-,g_-](\omega_*) \leqno %\stepcounter{equation}
\refstepcounter{equation}(\the\value{equation})\label{WronksianIsZero}$$
\vspace{-1.9em}$$\hspace{7em}+\;(\alpha_1+\alpha_4)(\alpha_3+\alpha_4)W[h_-,g_+](\omega_*)$$
\vspace{-1.5em}$$\hspace{7em}+\;(\alpha_2+\alpha_3)(\alpha_1+\alpha_2)W[h_+,g_-](\omega_*)$$
\vspace{-1.5em}$$\hspace{7em}+\;(\alpha_2+\alpha_3)(\alpha_3+\alpha_4)W[h_+,g_+](\omega_*).$$
It follows by Lemma \ref{LemPosWronskian} that if all the Kharitonov polynomials in (\ref{DefKharPolys}) are non-constant, then each of $W[h_-,g_-](\omega_*),W[h_-,g_+](\omega_*),W[h_+,g_-](\omega_*)$, and $W[h_+,g_+](\omega_*)$ are positive so \eqref{WronksianIsZero} would imply that $\alpha_j=0$ for $j=1,2,3,4$, a contradiction that $\sum_{j=1}^4\alpha_j=1$. Hence at least one of the Kharitonov polynomials in (\ref{DefKharPolys}) is constant, but as they cannot all be constants (since $p$ is not Hurwitz stable), this implies by Corollary \ref{CorKharPolysPositiveCoeffs} that $n=1$ and $a_n^-=0$. In this case, a similar argument as above implies $\alpha_j=0$ for $j=1,2,3,4$, a contradiction. This proves the theorem.\qedhere
\end{proof}

\section*{Declarations}

\subsection*{Funding} The work of AW was supported by the Simons Foundation under Grant MPS-TSM-00002799 and by the National Science Foundation under Grant DMS-2410678.
\subsection*{Conflicts of interest/Competing interests} Not applicable.
\subsection*{Availability of data and material} Not applicable.
\subsection*{Code availability} Not applicable.

\bibliography{PrfKharitonovsThmBib}

\end{document}